\def\RR{\hbox{I\kern-.2em\hbox{R}}}
\newtheorem{theorem}{\indent Theorem}[section]
\newtheorem{lemma}{\indent Lemma}[section]
\newtheorem{corollary}[theorem]{\indent Corollary}
\newtheorem{example}[theorem]{\indent Example}
\title{Exponential stability for a system of second and first order delay differential equations
}
\author{Leonid Berezansky \\
Ben-Gurion University of the Negev, Dept. of Math., 
Beer-Sheva 84105, Israel \\
Elena Braverman
\\
 Dept. of Math \& Stats, University of Calgary, 
	Calgary, AB, Canada T2N1N4 }
\date{}
\begin{document}

\begin{abstract}
Exponential stability
of the second order linear delay differential equation in $x$ and $u$-control
$$
\ddot{x}(t)+a_1(t)\dot{x}(h_1(t))+a_2(t)x(h_2(t))+a_3(t)u(h_3(t))=0
$$
is studied, where indirect feedback control $\dot{u}(t)+b_1(t)u(g_1(t))+b_2(t)x(g_2(t))=0$ connects $u$ with the solution.
Explicit sufficient conditions guarantee that both $x$ and $u$ decay exponentially.

{\bf Keywords:} Linear delayed differential system; exponential  stability; a priori estimates;
Bohl-Perron theorem; second order delay differential equation.

{\bf AMS (MOS) subject classification:}  34K20, 34K06.

\end{abstract}

\maketitle

\section{Introduction and Preliminaries}

Ordinary differential equations (ODE) of the second order are one of the most interesting in applications and well investigated classes of differential equations. Delay differential equations (DDE) of the second order have also received a lot of attention, see \cite{ABBD,BDK,Burton2,Kolman,KN} and references therein.
To study stability, several methods were applied: checking that the roots of quasi-polynomials have negative real parts and Lyapunov-Krasovskii functionals~\cite{Kolman}, the fixed-point method~\cite{Burton2}, as well as the Bohl-Perron theorem and solution estimates~\cite{BDK,AS,B}.
A detailed study of stability properties for different classes of DDE
of the second order can be found  in the recent monograph \cite{BDK}. 
In particular, \cite{BDK} considers control problems for stabilization of DDE of the second order.

Some  real-world  models can be described by systems of two functional differential equations, where at least one of the equations is of the second order.
A simple model of ship stabilization \cite[Chapter 1, P. 4]{KN} is a system of two delay differential equations 
\begin{equation}
\label{rudder_syst}
\begin{array}{l}
I\ddot{\varphi}(t)+h\dot{\varphi}(t)=-K\psi(t)\\
T\dot{\psi}(t)+\psi(t)=\alpha\varphi(t-\tau)+\beta\dot{\varphi}(t-\tau),
\end{array}
\end{equation}
where $\varphi$ is the ship deviation angle and $\psi$ is the turning angle of the rudder, and
all the constants  are positive numbers. 

A harmonic oscillator $\ddot{x}(t)+b x(t)=0$ with an external force, a damping term, and the delay involved in both control and non-control terms, becomes
$$
\ddot{x}(t)+a\dot{x}(t-h_1)+bx(t-h_2)+cu(t-h_3)=f(t).
$$
Many motion equations with delay can be found in \cite{Kyrychko}. Further, motion control models can also be described by a system of DDE.
In control theory, two types of feedback systems are distinguished: direct control and indirect control systems. In many cases, explicitly defined signal $u$ cannot be used for control of a dynamical system, as it may be too weak. In this case, indirect feedback control is applied where an additional equation connecting $x$ and $u$ is introduced. 

For example, a simple model of movement control can be specified in the form of 
the following system
\begin{equation}
\begin{array}{l}
\ddot{x}(t)+a\dot{x}(t-h_1)+bx(t-h_2)+cu(t-h_3)=f(t),\\
\dot{u}(t)+du(t-g)+ex(t)=0,
\end{array}
\end{equation}
where the first equation is the  equation of the motion, and the second one describes indirect feedback control.
The function $u(t)$ is the control, $f(t)$ is  an external force, such as the power of a rocket engine.
For feasibility of such a model, any bounded right-hand side should lead to a bounded solution.
This requirement is satisfied if the system is exponentially stable.

Stability for a system of two equations, the first one of which is a second-order DDE, and second one is a first order DDE is the main object of this paper. 
We apply a priori estimates for $x$ and $u$ and their derivatives, Bohl-Perron theorem, $M$-matrices and matrix inequalities. For a scalar DDE  of the second order, these approaches are described in detail in \cite{BDK}.
Let us note that systems of several DDE of {\em different orders} have not been investigated before.

We consider the system of two DDE
\begin{equation}\label{2.1}
\ddot{x}(t)+a_1(t)\dot{x}(h_1(t))+a_2(t)x(h_2(t))+a_3(t)u(h_3(t))=0, ~~t\geq 0,
\end{equation}
\begin{equation}\label{2.2}
\dot{u}(t)+b_1(t)u(g_1(t))+b_2(t)x(g_2(t))=0,~~ t\geq 0,
\end{equation}
assuming without further mentioning that
\\
(a1) $a_i, b_i: [0,\infty) \to {\mathbb R}$ are Lebesgue measurable and  
essentially bounded, $0\leq t-h_i(t)\leq \tau_i$, $0\leq t-g_i(t)\leq \sigma_i$, where 
$h_i:[-\tau_i,\infty) \to {\mathbb R}$, $i=1,2,3$, $g_i(t):[-\sigma_i,\infty) \to {\mathbb R}$, $i=1,2$, are Lebesgue measurable,  $t \geq 0$.  

Non-homogeneous equivalent of \eqref{2.1}-\eqref{2.2} starting at $t_0\geq 0$ is 
\begin{equation}\label{2.3}
\ddot{x}(t)+a_1(t)\dot{x}(h_1(t))+a_2(t)x(h_2(t))+a_3(t)u(h_3(t))=f_1(t), ~~t\geq t_0,
\end{equation}
\begin{equation}\label{2.4}
\dot{u}(t)+b_1(t)u(g_1(t))+b_2(t)x(g_2(t))=f_2(t), ~~t\geq t_0.
\end{equation}
The initial conditions 
\begin{equation}\label{2.5}
x(t)=\varphi_1(t),\dot{x}(t)=\varphi_2(t), u(t)=\psi(t),~t\leq t_0
\end{equation} 
are everywhere assumed to satisfy
\\
(a2) $f_i:[t_0,\infty)\rightarrow {\mathbb R}$ are Lebesgue measurable 
essentially bounded functions, 
$\varphi_i:[t_0-\max_j \{\tau_j,\sigma_j \},t_0]\rightarrow {\mathbb R}$, 
$\psi:[t_0-\max_j \{\tau_j,\sigma_j \},t_0]\rightarrow {\mathbb R}$
are Borel measurable bounded functions.

The set of functions $x,u:{\mathbb R} \rightarrow {\mathbb R}$ with a locally 
absolutely continuous on $[t_0,\infty)$ derivatives $\dot{x}$ and $\dot{u}$ is 
{\bf a solution} of  problem \eqref{2.3}-\eqref{2.5} 
if \eqref{2.3}, \eqref{2.4} hold almost everywhere  for $t>t_0$ and \eqref{2.5} holds for $t\leq t_0$.

Some general considerations in \cite{AS} allow to state that the solution of  \ref{2.3}-\eqref{2.5} exists and is unique.

System \eqref{2.1}-\eqref{2.2} is {\bf uniformly exponentially 
stable}, if there exist
$M>0$, $\mu>0$,  such that  the solution of \eqref{2.3}-\eqref{2.5} with $f_1=f_2 \equiv 0$ satisfies
$$
\max\{|x(t)|,|u(t)|\}\leq M~e^{-\mu (t-t_0)}\left[ \sup_{t<t_0} \left( |\varphi_1(t)|+|\varphi_2(t)|+|\psi(t)|\right) \right],~t\geq t_0,
$$
where $M$ and $\mu$ do not depend on $t_ 0$ and on initial functions in \eqref{2.5}.
 
Further, ${\bf L}_{\infty}[t_0,\infty)$ is the space of all measurable essentially
bounded functions $y:[t_0,\infty) \to {\mathbb R}$ with the 
norm 
$
\|y\|_{[t_0,\infty)}=\mbox{ess}\sup_{t\geq t_0} |y(t)|,
$ 
similarly for any interval $\Omega=[t_0,t_1]$, $t_1>t_0$, 
${\bf L}_{\infty}(\Omega)$ has the norm
$
\|y\|_{\Omega}=\mbox{ess}\sup_{t\in \Omega} |y(t)|
$, 
${\bf C}[t_0,\infty)$ is the space of all continuous  bounded functions on  
$[t_0,\infty)$ with the $\sup$-norm. 


\begin{lemma}\cite[Bohl-Perron theorem]{AS}
\label{lemma2.2}
Suppose there exists $t_0\geq 0$ such that for every 
$f_i\in {\bf L}_{\infty}[t_0,\infty), i=1,2$ both functions of
 the solution $(x,u)$ of  problem \eqref{2.3}-\eqref{2.4}, where
\begin{equation}\label{2.6}
x(t)=0, \dot{x}(t)=0, u(t)=0,~t\leq t_0,
\end{equation}
belongs to ${\bf C}[t_0,\infty)$. Then system \eqref{2.1}-\eqref{2.2} is uniformly exponentially stable.
\end{lemma}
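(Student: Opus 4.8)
The plan is to establish this Bohl--Perron principle by combining the variation-of-constants (Cauchy operator) representation with an exponential-weight substitution and a perturbation argument, in the spirit of the Azbelev--Simonov approach. First I would recast the mixed-order system \eqref{2.3}--\eqref{2.4} as a first-order system for the vector $z=(x,\dot x,u)^{\top}$: setting $z_1=x$, $z_2=\dot x$, $z_3=u$ turns \eqref{2.3}--\eqref{2.4} into $\dot z(t)=(\mathcal{A}z)(t)+F(t)$, where $\mathcal{A}$ is the bounded linear delay operator assembled from $a_i,b_i$ and the delayed evaluations $z_2(h_1(t)),z_1(h_2(t)),z_3(h_3(t)),z_3(g_1(t)),z_1(g_2(t))$, and $F=(0,f_1,f_2)^{\top}$. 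Under the zero initial data \eqref{2.6} the solution is $z=\mathcal{C}F$, where $\mathcal{C}$ is the Cauchy (solution) operator. The hypothesis that every $f_i\in\mathbf{L}_\infty$ yields $(x,u)\in\mathbf{C}$ says precisely that $\mathcal{C}$ maps $\mathbf{L}_\infty[t_0,\infty)$ into $\mathbf{C}[t_0,\infty)\subset\mathbf{L}_\infty[t_0,\infty)$, and by the closed graph theorem $\mathcal{C}$ is then a bounded operator on $\mathbf{L}_\infty[t_0,\infty)$.

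Next I would perform the exponential substitution $z(t)=e^{-\lambda(t-t_0)}w(t)$ for a parameter $\lambda>0$. Because all delays are bounded, $0\le t-h_i(t)\le\tau_i$ and $0\le t-g_i(t)\le\sigma_i$, substituting produces a shifted first-order system $\dot w=\mathcal{A}_\lambda w+G$ with $G=e^{\lambda(\cdot-t_0)}F$, whose coefficient operator $\mathcal{A}_\lambda$ differs from $\mathcal{A}$ only through factors $e^{\lambda(t-h_i(t))}$ and $e^{\lambda(t-g_i(t))}$, each bounded by $e^{\lambda\tau_i}$ or $e^{\lambda\sigma_i}$ and tending to $1$ uniformly as $\lambda\to 0^{+}$. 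Hence $\|\mathcal{A}_\lambda-\mathcal{A}\|\to 0$ as $\lambda\to 0^{+}$.

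The perturbation step is the heart of the argument. Rewriting $\dot w-\mathcal{A}_\lambda w=G$ as $\dot w-\mathcal{A}w=G+(\mathcal{A}_\lambda-\mathcal{A})w$ and applying $\mathcal{C}$ gives $w=\mathcal{C}G+\mathcal{B}_\lambda w$ with $\mathcal{B}_\lambda=\mathcal{C}(\mathcal{A}_\lambda-\mathcal{A})$, so the shifted Cauchy operator is $\mathcal{C}_\lambda=(I-\mathcal{B}_\lambda)^{-1}\mathcal{C}$. Since $\|\mathcal{B}_\lambda\|\le\|\mathcal{C}\|\,\|\mathcal{A}_\lambda-\mathcal{A}\|\to 0$, a Neumann-series argument shows $\mathcal{C}_\lambda$ is bounded on $\mathbf{L}_\infty[t_0,\infty)$ for all sufficiently small $\lambda>0$. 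Boundedness of $\mathcal{C}_\lambda$ means bounded forcing for the shifted system yields a bounded $w$; applied to the homogeneous problem this gives $|w(t)|\le M'\sup(|\varphi_1|+|\varphi_2|+|\psi|)$, i.e. $\max\{|x(t)|,|u(t)|\}\le M e^{-\mu(t-t_0)}\sup(|\varphi_1|+|\varphi_2|+|\psi|)$ with $\mu=\lambda$, which is exactly uniform exponential stability.

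Finally I would pass from zero to general initial data \eqref{2.5}: via the fundamental matrix of the first-order system, the contribution of the prehistory $\varphi_1,\varphi_2,\psi$ enters as an inhomogeneity supported on $[t_0,t_0+\max_j\{\tau_j,\sigma_j\}]$ and controlled by $\sup(|\varphi_1|+|\varphi_2|+|\psi|)$, reducing the homogeneous initial-value problem to the zero-initial-data forced problem already treated. The main obstacle I anticipate is twofold: verifying the norm-continuity estimate $\|\mathcal{A}_\lambda-\mathcal{A}\|\to 0$ rigorously for the delay operator, and, more delicately, ensuring that \emph{all} constants are uniform in $t_0$. One must confirm that the integrability bounds on the fundamental solution, and hence $\|\mathcal{C}\|$, do not deteriorate as the starting point $t_0$ varies; this uniformity is precisely what makes the decay rate $\mu$ and the constant $M$ independent of $t_0$, as the definition of uniform exponential stability demands.
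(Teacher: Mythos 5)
The paper does not prove Lemma \ref{lemma2.2} at all: it is imported verbatim from Azbelev and Simonov \cite{AS} as a known form of the Bohl--Perron theorem, so there is no internal argument to compare yours against. What you have written is, in substance, the classical Azbelev--Simonov proof --- reduction of \eqref{2.3}--\eqref{2.4} to a first-order functional differential system for $(x,\dot x,u)$, closed-graph boundedness of the Cauchy operator on ${\bf L}_\infty$, the exponential weight $z(t)=e^{-\lambda(t-t_0)}w(t)$, a Neumann-series perturbation for small $\lambda$, and the reduction of the prehistory to a compactly supported forcing term --- so as an outline it is correct and matches the cited source rather than deviating from it.

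Three points in your sketch deserve tightening if you were to write it out in full. First, $\mathcal{A}_\lambda$ differs from $\mathcal{A}$ not only through the factors $e^{\lambda(t-h_i(t))}$, $e^{\lambda(t-g_i(t))}$ but also through the term $\lambda w$ produced by differentiating the weight; this is harmless (it also tends to $0$ in norm as $\lambda\to 0^{+}$, using (a1)), but it should appear. Second, the fixed-point identity $w=\mathcal{C}G+\mathcal{B}_\lambda w$ cannot be posed directly on $[t_0,\infty)$: for a general $F\in{\bf L}_\infty$ the weighted forcing $G=e^{\lambda(\cdot-t_0)}F$ is unbounded, and $w$ is not known a priori to lie in ${\bf L}_\infty$. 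One must either run the estimate on finite intervals $[t_0,t_1]$ with bounds uniform in $t_1$, exploiting the Volterra property of $\mathcal{C}$ (exactly the device the paper itself uses with $\Omega=[t_0,t_1]$ in the proof of Theorem \ref{theorem3.1}), or restrict at once to the case where $F$ is the prehistory-generated forcing, which is supported on $[t_0,t_0+\max_j\{\tau_j,\sigma_j\}]$ so that $G$ stays bounded. Third, the uniformity of $M$ and $\mu$ in the starting point --- which you correctly identify as the delicate issue, since the hypothesis concerns a single $t_0$ while the conclusion must hold for every initial time --- is the genuinely nontrivial part of the theorem; it rests on the uniform bounds in (a1) and on showing that the Cauchy-operator norms for start times $s\ge t_0$ admit a bound independent of $s$. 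You flag this rather than close it, which is acceptable for a proof plan but is precisely the portion for which the paper defers to \cite{AS}.
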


Consider an ordinary differential equation of the second order
\begin{equation}\label{2.7}
\ddot{x}(t)+a(t)\dot{x}(t)+b(t)x(t)=0.
\end{equation}
The fundamental function $X(t,s)$ of equation \eqref{2.7} is the solution of equation 
\eqref{2.7} for $t\geq s$ and initial conditions $x(s)=0, \dot{x}(s)=1$.
The solution of the initial value problem
$$
\ddot{x}(t)+a(t)\dot{x}(t)+b(t)x(t)=f(t), ~~t\geq t_0,
~~
x(t_0)=\dot{x}(t_0)=0
$$
has the form
$\displaystyle
x(t)=\int_{t_0}^t X(t,s) f(s)ds.
$

\begin{lemma}\cite[Page 551]{BDK}\label{lemma2.3}
Assume that $0<\alpha \leq a(t)\leq A$, $0< \beta \leq b(t)\leq B$, $\alpha^2\geq 4B$. Then the fundamental function of equation \eqref{2.7} is positive, $X(t,s) > 0$, $t \geq  s \geq t_0$ , \eqref{2.7}  is uniformly exponentially stable and
\begin{equation}\label{2.8}
\int_{t_0}^t X(t,s) b(s)ds\leq 1.
\end{equation}
\end{lemma}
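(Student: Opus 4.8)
The plan is to prove all three assertions at once by producing a P\'{o}lya-type factorization of the operator $Lx := \ddot{x} + a(t)\dot{x} + b(t)x$ into two first-order operators with \emph{positive, bounded} coefficients, and then reading the conclusions off a manifestly positive integral representation of $X(t,s)$. Concretely, I would look for measurable functions $p,q$ on $[t_0,\infty)$ with
\[
L = \Big(\tfrac{d}{dt}+p\Big)\Big(\tfrac{d}{dt}+q\Big),
\]
which, upon expansion, forces $p = a - q$ together with the Riccati equation $\dot{q} = q^2 - a(t)q + b(t)$. Everything then rests on constructing a solution $q$ of this equation that is defined on the whole half-line and trapped between two positive constants.

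This construction is where the hypothesis $\alpha^2 \ge 4B$ is used, and it is the main obstacle. For each fixed $t$ the frozen quadratic $\xi \mapsto \xi^2 - a(t)\xi + b(t)$ has real roots $q_\pm(t) = \tfrac12\big(a(t)\pm\sqrt{a(t)^2-4b(t)}\big)$, since $a(t)^2 - 4b(t) \ge \alpha^2 - 4B \ge 0$. A direct monotonicity check gives $\sup_t q_-(t) = \tfrac12\big(\alpha-\sqrt{\alpha^2-4B}\big) \le \tfrac{\alpha}{2} \le \tfrac12\big(\alpha+\sqrt{\alpha^2-4B}\big) = \inf_t q_+(t)$ and $\inf_t q_-(t) = \tfrac12\big(A-\sqrt{A^2-4\beta}\big) > 0$ (note $A^2 \ge \alpha^2 \ge 4B \ge 4\beta$). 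Setting $c_1 = \inf_t q_-(t) > 0$ and $c_2 = \inf_t q_+(t)$, the strip $c_1 \le \xi \le c_2$ is forward-invariant for the Riccati flow, because at $\xi=c_1$ one has $\xi^2-a(t)\xi+b(t) \ge 0$ and at $\xi=c_2$ one has $\xi^2-a(t)\xi+b(t) \le 0$, i.e.\ the field points inward on both edges. I would therefore start a Carath\'{e}odory solution inside the strip and invoke a Nagumo/differential-inequality argument to keep it there, yielding $0 < c_1 \le q(t) \le c_2$, and hence $p = a - q \ge c_1 > 0$ as well. Justifying existence and invariance for merely measurable, essentially bounded $a,b$ (and handling the degenerate case $\alpha^2 = 4B$, where the strip can pinch at the worst times) is the delicate point.

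With $p,q$ in hand I would solve $Lx=f$, $x(s)=\dot{x}(s)=0$, by inverting the two factors successively; Fubini then gives
\[
X(t,s) = \int_s^t \exp\!\Big(-\!\int_\zeta^t q\Big)\exp\!\Big(-\!\int_s^\zeta p\Big)\,d\zeta ,
\]
which is positive, so $X(t,s)>0$ for $t\ge s\ge t_0$. The lower bounds $p,q\ge c_1$ yield $X(t,s)\le (t-s)\,e^{-c_1(t-s)}$, and since every solution of \eqref{2.7} is obtained by inverting $\tfrac{d}{dt}+p$ and then $\tfrac{d}{dt}+q$, each step contributing a factor decaying like $e^{-c_1(\cdot)}$, all solutions decay as a polynomial times $e^{-c_1 t}$; this delivers uniform exponential stability at any rate $\mu<c_1$.

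Finally, for \eqref{2.8} I would exploit that the constant function $1$ satisfies $L\,1 = b$. Thus $x(t):=\int_{t_0}^t X(t,s)b(s)\,ds$, the zero-data solution of $Lx=b$, makes $v:=1-x$ solve $Lv=0$ with $v(t_0)=1,\ \dot v(t_0)=0$. Applying the factorization, $w:=\dot v + qv$ satisfies $\dot w + pw = 0$ with $w(t_0)=q(t_0)>0$, so $w(t)=q(t_0)\exp\!\big(-\!\int_{t_0}^t p\big)>0$, and then
\[
v(t) = \exp\!\Big(-\!\int_{t_0}^t q\Big) + \int_{t_0}^t \exp\!\Big(-\!\int_\zeta^t q\Big)\,w(\zeta)\,d\zeta > 0 .
\]
Hence $\int_{t_0}^t X(t,s)b(s)\,ds = x(t) = 1 - v(t) \le 1$, which is exactly \eqref{2.8}.
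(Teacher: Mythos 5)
Your proof is correct, and there is little to compare it against inside the paper itself: Lemma \ref{lemma2.3} is not proved here but quoted from \cite[p.~551]{BDK}, and the argument you give is essentially the standard one behind such results in that monograph --- factorization of $L=\frac{d^2}{dt^2}+a\frac{d}{dt}+b$ into first-order operators with positive bounded coefficients via a Riccati substitution, positivity of $X(t,s)$ read off the resulting double-exponential kernel, and the bound \eqref{2.8} from the observation that $v(t)=1-\int_{t_0}^t X(t,s)b(s)\,ds$ solves the homogeneous equation \eqref{2.7} with $v(t_0)=1$, $\dot v(t_0)=0$ and remains positive. The step you flag as delicate does close along the lines you indicate: since $f(t,\xi)=\xi^2-a(t)\xi+b(t)$ is locally Lipschitz in $\xi$ uniformly in a.e.\ $t$ and $a,b$ are essentially bounded, Carath\'{e}odory existence, uniqueness and continuation hold, and the trapped solution is global because the strip is compact. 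Downward escape through $\xi=c_1$ is impossible since $f(t,\xi)>0$ strictly below $q_-(t)$; for the upper edge, where the field can indeed point outward just above $c_2$ at times when $q_+(t)=c_2$, writing $f=(\xi-q_-)(\xi-q_+)$ and using $q_+(t)\ge c_2$ gives $f(t,\xi)\le K(\xi-c_2)$ for $c_2<\xi\le c_2+1$, so $y=(q-c_2)^+$ satisfies $\dot y\le Ky$ a.e.\ with $y(t_0)=0$ and vanishes identically; this argument is insensitive to the width of the strip, so the degenerate case $\alpha^2=4B$ needs no separate treatment. Two cosmetic points: your $\sup_t q_-(t)$ and $\inf_t q_+(t)$ formulas should be stated as inequalities ($\le$ and $\ge$ respectively), which is all you use; and your lower bound $p=a-q\ge a-c_2\ge \tfrac12\bigl(\alpha-\sqrt{\alpha^2-4B}\bigr)=\sup_t q_-(t)\ge c_1>0$ deserves the one-line verification, since it is what makes both exponential factors in the kernel decay and yields $X(t,s)\le (t-s)e^{-c_1(t-s)}$ with constants depending only on $\alpha,A,\beta,B$, hence uniformly in $t_0$, exactly as uniform exponential stability requires.
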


We recall \cite{Berman} that a matrix $B=(b_{ij})_{i,j=1}^m$ is a non-singular $M$-matrix if $b_{ij}\leq 0$, $i\neq j$, 
and either $B$ is invertible with $B^{-1} \geq 0$,
or, equivalently, the leading principal minors of $B$ are positive.
If $B=I-A$, where $I$ is the identity matrix, 
and $A\geq 0$, $B$ is an $M$-matrix if and only if the spectral radius of $A$ is less than one.

\section{The Main Result}

Introduce a $5\times 5$ matrix
\begin{equation}\label{3.0}
A=\left(\begin{array}{ccccc}
0&\tau_2&\tau_1\left\|\frac{a_1}{a_2}\right\|_{[t_0,\infty)}& \left\|\frac{a_3}{a_2}\right\|_{[t_0,\infty)}&0\\
\left\|\frac{a_2}{a_1}\right\|_{[t_0,\infty)}&0&\tau_1&\left\|\frac{a_3}{a_1}\right\|_{[t_0,\infty)}&0\\
\|a_2\|_{[t_0,\infty)}& \|a_1\|_{[t_0,\infty)}&0&\|a_3\|_{[t_0,\infty)}&0\\
\left\|\frac{b_2}{b_1}\right\|_{[t_0,\infty)}&0&0&0&\sigma_1\\
\|b_2\|_{[t_0,\infty)}&0&0&\|b_1\|_{[t_0,\infty)}&0\\
\end{array}\right).
\end{equation}

\begin{theorem}\label{theorem3.1}
Assume that for some $t_0\geq 0$, the inequalities
$
0<\alpha_i\leq a_i(t)\leq A_i$, $i=1,2$, $|a_3(t)|\leq A_3$,
$0 < \beta_1 \leq b_1(t) \leq  B_1$,  $|b_2(t)|\leq B_2$, $\alpha_1^2 \geq 4A_2$
hold for $t \geq t_0$, and either the spectral radius of $A$ in \eqref{3.0} is less than one or, which is equivalent, $B=I-A$ is an $M$-matrix.

Then system \eqref{2.1},\eqref{2.2} is uniformly exponentially stable.
\end{theorem}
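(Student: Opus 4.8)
The plan is to verify the hypothesis of the Bohl--Perron theorem, Lemma~\ref{lemma2.2}. I fix $t_0$, take arbitrary $f_1,f_2\in{\bf L}_\infty[t_0,\infty)$, let $(x,u)$ be the solution of the non-homogeneous system \eqref{2.3}--\eqref{2.4} with the zero initial data \eqref{2.6}, and aim to show $x,u\in{\bf C}[t_0,\infty)$. I would work on an arbitrary finite window $[t_0,t_1]$, where linearity and essential boundedness of the coefficients guarantee that all five quantities $\|x\|$, $\|\dot x\|$, $\|\ddot x\|$, $\|u\|$, $\|\dot u\|$ (in the ${\bf L}_\infty[t_0,t_1]$ norm) are finite, and derive five coupled a priori estimates that assemble into the vector inequality $v\le Av+c$, where $v=(\|x\|,\|\dot x\|,\|\ddot x\|,\|u\|,\|\dot u\|)^{\top}$, $A$ is the matrix \eqref{3.0}, and $c\ge 0$ is proportional to $\|f_1\|$ and $\|f_2\|$. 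The five rows of $A$ correspond, in this order, to the estimates for $x,\dot x,\ddot x,u,\dot u$.

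Two of the estimates are immediate: solving \eqref{2.3} for $\ddot x$ and \eqref{2.4} for $\dot u$ gives rows 3 and 5 with constants $\|f_1\|$ and $\|f_2\|$. For $u$ I would rewrite \eqref{2.4} as the perturbed first-order equation $\dot u(t)+b_1(t)u(t)=b_1(t)[u(t)-u(g_1(t))]-b_2(t)x(g_2(t))+f_2(t)$ and use the fundamental function $Y(t,s)=\exp(-\int_s^t b_1)$, which is positive and satisfies $\int_{t_0}^t Y(t,s)b_1(s)\,ds\le 1$ since $b_1>0$. Bounding $|u(s)-u(g_1(s))|\le\sigma_1\|\dot u\|$ and writing $|b_2|\le\|b_2/b_1\|\,b_1$ to reuse this single integral bound produces row 4. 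The decisive observation for row 2 is that $w:=\dot x$ itself solves a first-order delay equation, $\dot w(t)+a_1(t)w(h_1(t))=-a_2(t)x(h_2(t))-a_3(t)u(h_3(t))+f_1(t)$; treating it exactly as above with the positive kernel $Z(t,s)=\exp(-\int_s^t a_1)$ (here $a_1\ge\alpha_1>0$) and the bounds $a_2\le\|a_2/a_1\|\,a_1$, $|a_3|\le\|a_3/a_1\|\,a_1$ yields the coefficients $\|a_2/a_1\|,\tau_1,\|a_3/a_1\|$ of row 2.

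The remaining and most delicate estimate is row 1. Here I would view \eqref{2.3} as the second-order ordinary equation $\ddot x+a_1(t)\dot x+a_2(t)x=g(t)$ with $g=a_1[\dot x-\dot x(h_1)]+a_2[x-x(h_2)]-a_3u(h_3)+f_1$, so that $x(t)=\int_{t_0}^t X(t,s)g(s)\,ds$ with $X$ the fundamental function of the comparison equation. This is precisely where the hypothesis $\alpha_1^2\ge 4A_2$, together with the two-sided bounds on $a_1,a_2$, enters: Lemma~\ref{lemma2.3} then gives $X(t,s)>0$ and the single normalization \eqref{2.8}, namely $\int_{t_0}^t X(t,s)a_2(s)\,ds\le 1$. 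Estimating $|x-x(h_2)|\le\tau_2\|\dot x\|$, $|\dot x-\dot x(h_1)|\le\tau_1\|\ddot x\|$ and rewriting $a_1\le\|a_1/a_2\|\,a_2$, $|a_3|\le\|a_3/a_2\|\,a_2$ so that every term carries the weight $a_2(s)$ lets me collapse all the integrals against the one bound \eqref{2.8}, delivering the coefficients $\tau_2,\tau_1\|a_1/a_2\|,\|a_3/a_2\|$ of row 1. Positivity of $X$ is essential, since it is what allows $\int X a_2$ in place of $\int |X|\,(\cdots)$.

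Finally, with $v\le Av+c$, i.e. $Bv\le c$ for $B=I-A$, the $M$-matrix hypothesis gives $B^{-1}\ge 0$, hence $v\le B^{-1}c$. The crucial point is that this bound does not depend on $t_1$, so letting $t_1\to\infty$ shows $x,u$ and their derivatives are bounded, and hence $x,u\in{\bf C}[t_0,\infty)$; Lemma~\ref{lemma2.2} then yields uniform exponential stability. I expect the main obstacle to be twofold: checking positivity and the normalization \eqref{2.8} so that the $\|x\|$ estimate closes with exactly the stated coefficients, and the bookkeeping needed to guarantee that all five norms are finite on each finite window before the inequality is inverted. This finiteness, and the $t_1$-independence of $B^{-1}c$, are what make the passage to $[t_0,\infty)$ legitimate.
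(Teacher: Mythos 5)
Your proposal is correct and follows essentially the same route as the paper's proof: the same five a priori estimates (direct bounds for $\ddot x$ and $\dot u$, exponential-kernel integration for $\dot x$ and $u$, and the positive fundamental function $X(t,s)$ with the normalization \eqref{2.8} from Lemma~\ref{lemma2.3} for $x$), assembled into $v\le Av+c$ and inverted via the $M$-matrix property before invoking Lemma~\ref{lemma2.2}. Your explicit attention to the finiteness of the five norms on each finite window and to the $t_1$-independence of the bound is a point the paper leaves implicit, but it is the same argument.
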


\begin{proof}
To apply Lemma \ref{lemma2.2}, consider system \eqref{2.3}-\eqref{2.4} with essentially bounded on $[t_0,\infty)$
functions $f_i, i=1,2$ and initial conditions \eqref{2.6}. Let $t_1> t_0$, $\Omega=[t_0,t_1]$.

First, we estimate $\| x \|_{\Omega}$ and its derivatives, then  $\| u \|_{\Omega}$ and its derivative.

1) From equation \eqref{2.3} we have
\begin{equation}
\label{3.1}
\|\ddot{x}\|_{\Omega} \leq \|a_1\|_{[t_0,\infty)}\|\dot{x}\|_{\Omega}+\|a_2\|_{[t_0,\infty)}\|x\|_{\Omega}+
\|a_3\|_{[t_0,\infty)}\|u\|_{\Omega}+
\|f_1\|_{[t_0,\infty)}.
\end{equation}

Let us rewrite equation \eqref{2.3} as
$$
\ddot{x}(t)+a_1(t)\dot{x}(t)=a_1(t)\int_{h_1(t)}^t \ddot{x}(\xi)d\xi-a_2(t)x(h_2(t))-a_3(t)u(h_3(t))+f_1(t),
$$
leading to
$$
\dot{x}(t)=\int_{t_0}^t e^{-\int_s^t a_1(\xi)d\xi}a_1(s)\left[\int_{h_1(s)}^s\ddot{x}(\xi)d\xi-\frac{a_2(s)}{a_1(s)}x(h_2(s))
-\frac{a_3(s)}{a_1(s)}u(h_3(s))\right]ds+f_3(t),
$$
where $f_3(t)=\int_{t_0}^t e^{-\int_s^t a_1(\xi)d\xi}f_1(s)ds\in {\bf L}_{\infty}[t_0,\infty)$.
Since $\displaystyle \left| \int_{t_0}^t e^{-\int _s^t c(\xi)d\xi}c(s) g(s)~ds \right| \leq 
\| g \|_{[t_0,\infty)}$, $t > t_0$ for $c(t)\geq c_0>0$, we have
\begin{equation}\label{3.2}
\|\dot{x}\|_{\Omega} \leq \tau_1\|\ddot{x}\|_{\Omega}+\left\|\frac{a_2}{a_1}\right\|_{[t_0,\infty)}\|x\|_{\Omega}
+\left\|\frac{a_3}{a_1}\right\|_{[t_0,\infty)}\|u\|_{\Omega} +\|f_3\|_{[t_0,\infty)}.
\end{equation}
Further, \eqref{2.3}  can be written in a different form
$$
\ddot{x}(t)+a_1(t)\dot{x}(t)+a_2(t)x(t)=a_1(t)\int\limits_{h_1(t)}^t\ddot{x}(\xi)d\xi
+a_2(t)\int\limits_{h_2(t)}^t\dot{x}(\xi)d\xi-a_3(t)u(h_3(t))+f_1(t).
$$
Denote by $X(t,s)$ the fundamental function of the equation 
$
\ddot{x}(t)+a_1(t)\dot{x}(t)+a_2(t)x(t)=0.
$
By Lemma \ref{lemma2.3}, $X(t,s)>0$. Also,
$$
x(t)=\int_{t_0}^t X(t,s) a_2(s)\left[\frac{a_1(s)}{a_2(s)}\int_{h_1(s)}^s \ddot{x}(\xi)d\xi   
+\int_{h_2(s)}^s \dot{x}(\xi)d\xi-\frac{a_3(s)}{a_2(s)}u(h_3(s))\right]+f_4(t),
$$
where $f_4(t)=\int_{t_0}^t X(t,s)f_1(s)ds\in {\bf L}_{\infty}[t_0,\infty)$.
Lemma~\ref{lemma2.3} implies
\begin{equation}\label{3.3}
\|x\|_{\Omega} \leq \tau_1 \left\|\frac{a_1}{a_2}\right\|_{[t_0,\infty)}\|\ddot{x}\|_{\Omega}+\tau_2\|\dot{x}\|_{\Omega}
+\left\|\frac{a_3}{a_2}\right\|_{[t_0,\infty)}\|u\|_{\Omega}+\|f_4\|_{[t_0,\infty)}.
\end{equation}

2) Next, let us estimate $u$ and $\dot{u}$.
From equation \eqref{2.4},
\begin{equation}\label{3.4}
\|\dot{u}\|_{\Omega} \leq \|b_1\|_{[t_0,\infty)}\|u\|_{\Omega} +\|b_2\|_{[t_0,\infty)}\|x\|_{\Omega} +\|f_2\|_{[t_0,\infty)}.
\end{equation}
Rewriting equation \eqref{2.4} as
$$
\dot{u}(t)+b_1(t)u(t)=b_1(t)\int_{g_1(t)}^t \dot{u}(\xi)d\xi-b_2(t)x(g_2(t)) +f_2(t),
$$
we get for $f_5(t)=\int_{t_0}^t  e^{-\int_s^t b_1(\xi)d\xi}f_2(s)ds\in {\bf L}_{\infty}[t_0,\infty)$,
$$
u(t)=\int_{t_0}^t e^{-\int_s^t b_1(\xi)d\xi}b_1(s)\left[\int_{g_1(s)}^s \dot{u}(\xi)d\xi-\frac{b_2(s)}{b_1(s)}x(g_2(s))\right]ds
+f_5(t).
$$

Hence
\begin{equation}\label{3.5}
\|u\|_{\Omega} \leq \left\|\frac{b_2}{b_1}\right\|_{[t_0,\infty)}\|x\|_{\Omega}+\sigma_1\|\dot{u}\|_{\Omega} +\|f_5\|_{[t_0,\infty)}.
\end{equation}

Denote two five-dimensional column vectors
$X_u=(\|x\|_{\Omega},\|\dot{x}\|_{\Omega}, \|\ddot{x}\|_{\Omega},\|u\|_{\Omega},\|\dot{u}\|_{\Omega})$ and \\
$F=(\|f_4\|_{[t_0,\infty)},\|f_3\|_{[t_0,\infty)},
\|f_1\|_{[t_0,\infty)},\|f_5\|_{[t_0,\infty)},\|f_2\|_{[t_0,\infty)})$.
Inequalities \eqref{3.3},\eqref{3.2},\eqref{3.1},\eqref{3.5},\eqref{3.4}
can be rewritten in the matrix form 
$
X_u\leq A X_u+F$, 
where the matrix $A$ was introduced in \eqref{3.0}.
Since the spectral radius of the non-negative matrix $A$ is less than one, there exists a non-negative inverse matrix $(I-A)^{-1}$.
Hence $X_u\leq A X_u+F$ implies
\begin{equation}\label{3.12}
X_u\leq (I-A)^{-1}F, 
\end{equation}
where $I$ is the identity matrix.
The right hand side of \eqref{3.12} does not depend on $t_1$, therefore the two solutions $(x,u)$ of system \eqref{2.3}-\eqref{2.4} with the zero initial conditions are bounded functions on the interval $[t_0,\infty)$. By Lemma \ref{lemma2.2}, system \eqref{2.1}-\eqref{2.2} is uniformly exponentially stable.
\end{proof}

Next, consider ODE 
\begin{equation}\label{3.13}
\ddot{x}(t)+a_1(t)\dot{x}(t)+a_2(t)x(t)+a_3(t)u(h(t))=0
\end{equation}
with delayed indirect feedback control \eqref{2.2}. Assume that for \eqref{3.13} condition (a1) holds, in particular, that
$0 \leq t-h(t)\leq \tau$, $0 \leq t-g_i(t)\leq \sigma_i$, $i=1,2$. 

\begin{corollary}\label{c3.1}
Assume for some $t_0\geq 0$,
$0<\alpha_i\leq a_i(t)\leq A_i$, $i=1,2$, $|a_3(t)|\leq A_3$,
$0<\beta_1\leq b_1(t)\leq B_1$, $|b_2(t)|\leq B_2$, $\alpha_1^2\geq 4A_2$,
and 
\begin{equation}
\label{3.14}
\sigma_1\|b_1\|_{[t_0,\infty)}+ \sigma_1 \| b_2\|_{[t_0, \infty)} \left\| \frac{a_3}{a_2} \right\|_{[t_0, \infty)} +
\left\|\frac{a_3}{a_2}\right\|_{[t_0,\infty)}\left\|\frac{b_2}{b_1}\right\|_{[t_0,\infty)}
<1.
\end{equation}
Then system \eqref{3.13},\eqref{2.2} is uniformly exponentially stable.
\end{corollary}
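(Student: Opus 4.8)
The plan is to read Corollary \ref{c3.1} as the specialization of Theorem \ref{theorem3.1} in which the second-order equation carries no delay in the $\dot x$- and $x$-terms, i.e. $h_1(t)=h_2(t)=t$, so that $\tau_1=\tau_2=0$, while the control term $a_3(t)u(h(t))$ retains its delay (whose bound $\tau$ does not even enter the matrix \eqref{3.0}). Under the coefficient bounds listed in the corollary, every hypothesis of Theorem \ref{theorem3.1} except the spectral-radius condition holds verbatim. Hence it suffices to insert $\tau_1=\tau_2=0$ into the $5\times5$ matrix $A$ of \eqref{3.0}, show its spectral radius is below one, and identify this requirement with the explicit inequality \eqref{3.14}.

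Setting $\tau_1=\tau_2=0$ kills the entries $A_{12},A_{13},A_{23}$, so the third column of $A$ (the coefficients of $\|\ddot x\|_\Omega$) becomes identically zero and the second column has a single nonzero entry, $A_{32}=\|a_1\|_{[t_0,\infty)}$. The key structural observation is that, after reordering the coordinates as $\big(\|x\|_\Omega,\|u\|_\Omega,\|\dot u\|_\Omega,\|\dot x\|_\Omega,\|\ddot x\|_\Omega\big)$, the matrix $A$ is block lower-triangular, with the $2\times2$ trailing block (governing $\|\dot x\|_\Omega$ and $\|\ddot x\|_\Omega$) strictly lower-triangular, hence nilpotent. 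Thus $\|\dot x\|_\Omega$ and $\|\ddot x\|_\Omega$ contribute only zero eigenvalues, and $\rho(A)$ equals the spectral radius of the $3\times3$ core acting on $\big(\|x\|_\Omega,\|u\|_\Omega,\|\dot u\|_\Omega\big)$,
\[
\tilde A=\begin{pmatrix}
0 & \left\|\frac{a_3}{a_2}\right\|_{[t_0,\infty)} & 0\\
\left\|\frac{b_2}{b_1}\right\|_{[t_0,\infty)} & 0 & \sigma_1\\
\|b_2\|_{[t_0,\infty)} & \|b_1\|_{[t_0,\infty)} & 0
\end{pmatrix}.
\]
It then remains to prove that $I-\tilde A$ is a non-singular $M$-matrix, for which I would verify positivity of its three leading principal minors.

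The first leading minor is $1$. Expanding the full $3\times3$ determinant, I expect to obtain
\[
\det(I-\tilde A)=1-\sigma_1\|b_1\|_{[t_0,\infty)}-\sigma_1\|b_2\|_{[t_0,\infty)}\left\|\frac{a_3}{a_2}\right\|_{[t_0,\infty)}-\left\|\frac{a_3}{a_2}\right\|_{[t_0,\infty)}\left\|\frac{b_2}{b_1}\right\|_{[t_0,\infty)},
\]
which is exactly $1$ minus the left-hand side of \eqref{3.14}; so \eqref{3.14} says precisely that this determinant is positive. Because every summand in \eqref{3.14} is non-negative, \eqref{3.14} forces in particular $\big\|\tfrac{a_3}{a_2}\big\|_{[t_0,\infty)}\big\|\tfrac{b_2}{b_1}\big\|_{[t_0,\infty)}<1$, so the remaining $2\times2$ leading minor $1-\big\|\tfrac{a_3}{a_2}\big\|_{[t_0,\infty)}\big\|\tfrac{b_2}{b_1}\big\|_{[t_0,\infty)}$ is positive as well. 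With all three leading principal minors positive, $I-\tilde A$ is a non-singular $M$-matrix, so $\rho(\tilde A)<1$ and therefore $\rho(A)<1$; Theorem \ref{theorem3.1} then yields uniform exponential stability.

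The one genuinely non-routine point is the reduction $\rho(A)=\rho(\tilde A)$: one must argue that the two downstream coordinates $\|\dot x\|_\Omega,\|\ddot x\|_\Omega$ carry only zero eigenvalues. I expect this to be the main obstacle to phrase cleanly, and I would handle it either through the block lower-triangular form described above (the nilpotent trailing block) or, equivalently, by simply eliminating those two coordinates from the inequality system \eqref{3.3}--\eqref{3.4} and applying the $M$-matrix criterion directly to the $3\times3$ core; the subsequent determinant computation and minor checks are elementary.
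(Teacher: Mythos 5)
Your proposal is correct and follows essentially the same route as the paper: specialize the matrix $A$ of \eqref{3.0} with $\tau_1=\tau_2=0$, observe that condition \eqref{3.14} is exactly positivity of the relevant leading principal minors (the determinant being $1$ minus the left-hand side of \eqref{3.14}), and invoke Theorem~\ref{theorem3.1} via the $M$-matrix criterion. The only difference is presentational: you first permute coordinates to exhibit a block lower-triangular structure and reduce to the $3\times 3$ core $\tilde A$ (the trailing block being nilpotent, so $\rho(A)=\rho(\tilde A)$), whereas the paper checks the leading principal minors of the full $5\times 5$ matrix $B=I-A$ directly; both computations yield the same inequality, and your reduction step is verified correctly.
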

\begin{proof}
The matrix $A$ denoted by \eqref{3.0} for system \eqref{3.13},\eqref{2.2} is
$$
\left(\begin{array}{lllll}
0&0&0& \left\|\frac{a_3}{a_2}\right\|_{[t_0,\infty)}&0\\
\left\|\frac{a_2}{a_1}\right\|_{[t_0,\infty)}&0&0&\left\|\frac{a_3}{a_1}\right\|_{[t_0,\infty)}&0\\
\|a_2\|_{[t_0,\infty)}& \|a_1\|_{[t_0,\infty)}&0&\|a_3\|_{[t_0,\infty)}&0\\
\left\|\frac{b_2}{b_1}\right\|_{[t_0,\infty)}&0&0&0&\sigma_1\\
\|b_2\|_{[t_0,\infty)}&0&0&\|b_1\|_{[t_0,\infty)}&0\\
\end{array}\right),
$$
and $B=I-A$ has the form
$$
\left(\begin{array}{ccccc}
1&0&0& -\left\|\frac{a_3}{a_2}\right\|_{[t_0,\infty)}&0\\
-\left\|\frac{a_2}{a_1}\right\|_{[t_0,\infty)}&1&0&-\left\|\frac{a_3}{a_1}\right\|_{[t_0,\infty)}&0\\
-\|a_2\|_{[t_0,\infty)}& -\|a_1\|_{[t_0,\infty)}&1&-\|a_3\|_{[t_0,\infty)}&0\\
-\left\|\frac{b_2}{b_1}\right\|_{[t_0,\infty)}&0&0&1&-\sigma_1\\
-\|b_2\|_{[t_0,\infty)}&0&0&-\|b_1\|_{[t_0,\infty)}&1\\
\end{array}\right).
$$
Inequality \eqref{3.14} implies that all the leading principal minors of the second matrix $B$ 
are positive, thus $B$ is an $M$-matrix. By Theorem~\ref{theorem3.1},  system \eqref{3.13},\eqref{2.2} is uniformly exponentially stable.
\end{proof}

\section{Example and Discussion}

\begin{example}
Consider the system
\begin{equation}\label{4.1}
\begin{array}{ll}
& \ddot{x}(t)+(1+0.01 |\sin (t)|)\dot{x}(t-0.1|\sin (3t)|) \\
+ & (0.2+0.05|\cos (t)|)x(t-0.1|\cos (3t)|)
-0.1\sin (10 t) u(t-8 \sin^2 (5t))=0, 
\end{array}
\end{equation}
\begin{equation}\label{4.2}
\dot{u}(t)+ \left( 0.2+0.1|\cos (2t)| \right) u(t-0.1 \sin^2 (t))+0.1 \cos (t) x(t-5 \cos^2 (3t))=0.
\end{equation}
To apply Theorem~\ref{theorem3.1},  denote
$
a_1(t)=1+0.01 |\sin (t)|, a_2(t)=0.2+0.05|\cos (t)|$, $a_3(t)=-0.1\sin (10 t), 
$
$
b_1(t)=0.2+0.1|\cos (2t)|$, $b_2(t)=0.1 \cos (t)$,
$1\leq a_1(t)\leq 1.01$, $0.2\leq a_2(t)\leq 0.25$, $|a_3(t)|\leq 0.1$,
$
0.2\leq b_1(t)\leq 0.3$, $|b_2(t)|\leq 0.1$, $\tau_1=\tau_2=0.1$, $\tau_3=8$, $\sigma_1=0.1$, $\sigma_2=5$.
Since $\alpha_1=1, A_2=0.25$, the condition  $(\alpha_1)^2\geq 4A_2$ holds.
We have 
$$
\left\|\frac{a_1}{a_2}\right\|_{[t_0,\infty)}\leq 5.05,~~ \left\|\frac{a_3}{a_2}\right\|_{[t_0,\infty)}\leq 0.5, ~~
\left\|\frac{a_2}{a_1}\right\|_{[t_0,\infty)}\leq 0.25, 
~~ \left\|\frac{a_3}{a_1}\right\|_{[t_0,\infty)}\leq 0.1,
$$$$
\|a_1\|=1.01,~~\|a_2\|=0.25, ~~\|a_3\|=0.1, \left\|\frac{b_2}{b_1}\right\|_{[t_0,\infty)}\leq 0.5,
\|b_1\|=0.3, \|b_2\|=0.1.
$$
The matrix $A$ for system \eqref{4.1},\eqref{4.2} satisfies $0\leq A\leq \tilde{A}$, where
$
\displaystyle \tilde{A}=\left(\begin{array}{lllll}
0&0.1&0.505& 0.5&0\\
0.25&0&0.1&0.1&0\\
0.25&1.01&0&0.1&0\\
0.5&0&0&0&0.1\\
0.1&0&0&0.3&0\\
\end{array}\right).
$

Since $0\leq A\leq \tilde{A}$ then \cite[P.27, Corollary 1.5 (a)]{Berman}  for the spectral radius
$r(A) \leq r(\tilde{A})$.
We evaluate $r(\tilde{A}) \approx 0.8443<1$ 
numerically (using MATLAB or Wolfram alpha). 
By Theorem \ref{theorem3.1}, system \eqref{4.1},\eqref{4.2}
is uniformly exponentially stable.
\end{example}

Note that if $a_3(t)\equiv 0, b_2(t)\equiv0$ then system \eqref{2.1}-\eqref{2.2} transforms into two independent equations, the first one of the second order and the second of the first order. In this case Theorem~\eqref{theorem3.1}  implies 
\\
1) Equation 
$
\ddot{x}(t)+a_1(t)\dot{x}(h_1(t))+a_2(t)x(h_2(t))=0
$
is uniformly exponentially stable if the inequalities
$ 0<\alpha_i\leq a_i(t)\leq A_i$, $t-h_i(t)\leq \tau_i$, $i=1,2$, $\alpha_1^2\geq 4A_2$
hold for $t \geq t_0$, and the spectral radius of the matrix 
$\displaystyle \left(\begin{array}{ccccc}
0&\tau_2&\tau_1\left\|\frac{a_1}{a_2}\right\|_{[t_0,\infty)}& 0&0\\
\left\|\frac{a_2}{a_1}\right\|_{[t_0,\infty)}&0&0&0\\
\|a_2\|_{[t_0,\infty)}& \|a_1\|_{[t_0,\infty)}&0&0&0\\
0&0&0&0&\sigma_1\\
0&0&0&\|b_1\|_{[t_0,\infty)}&0\\
\end{array}\right)$ is less than one.


This stability condition coincides with a corollary of \cite[Theorem 1]{B}.

2) Equation $\dot{u}(t)+b_1(t)u(g_1(t))=0$ is uniformly exponentially stable if $0<\beta\leq b_1(t)\leq B_1$, $t-g_1(t)\leq \sigma_1$, $t\geq t_0$ and
$\sigma_1 B_1<1$. This stability condition is well-known, moreover, the constant 1 can be replaced with $\frac{3}{2}$
\cite{SB}.

\section*{Acknowledgment}

The second author acknowledges the support of NSERC, the grant RGPIN-2020-03934.
The authors are very grateful to the anonymous referees whose thoughtful comments significantly contributed to the paper presentation.


\begin{thebibliography}{99}

\bibitem{ABBD} 
R.P. Agarwal, L. Berezansky, E. Braverman, A. Domoshnitsky,
Nonoscillation Theory of Functional Differential Equations with Applications, Springer, New York, 2012.

\bibitem{BDK}
L. Berezansky, A. Domoshnitsky, R. Koplatadze, 
Oscillation, Nonoscillation, Stability and Asymptotic Properties for Second and Higher Order 
Functional Differential Equations, CRC Press, Boca Raton, 
2020.

\bibitem{Burton2} 
T.A. Burton, 
Stability by Fixed Point Theory for Functional
Differential Equations, Dover Publications, New York, 2006.


\bibitem{Kolman}
V. Kolmanovskii, A. Myshkis, 
Introduction to the Theory and Applications of Functional Differential Equations, 
in: Mathematics and its Applications, vol. 463, Kluwer Academic Publishers, Dordrecht, 
1999.

\bibitem{KN}
V.B. Kolmanovskiĭ, V.R. Nosov, 
Stability of Functional-differential Equations, in:
Mathematics in Science and Engineering, Vol. 180, Academic Press, 
London, 1986. 

\bibitem{AS} 
N.V. Azbelev,  P.M. Simonov, 
Stability of Differential Equations with Aftereffect, 
in: Stability and Control: Theory, Methods and
Applications, vol. 20, Taylor $\&$ Francis, London, 2003.


\bibitem{B}
L. Berezansky, 
On stability of the second order delay differential equation: three methods,
Funct. Differ. Equ. 28 (2021)
3--17. 

\bibitem{Kyrychko}
Y.N. Kyrychko, S.J. Hogan, 
On the use of delay equations in engineering applications,
J. Vib. Control 16 (2010)
 943--960.



\bibitem{Berman}
A. Berman A, R. Plemmons,   
Nonnegative Matrices in the Mathematical Sciences, 
in: Computer Science and Applied Mathematics, Academic
Press: New York-London, 1979.





\bibitem{SB}
J. I. Stavroulakis, E. Braverman,
Stability and oscillation of linear delay differential equations,
Journal of Differential Equations 
293 (2021) 282--312.

\end{thebibliography}
\end{document}